\documentclass[11pt]{article}

\usepackage{amsmath,amssymb,amsthm}
\usepackage[margin=1.05in]{geometry}
\usepackage[colorlinks=true,linkcolor=blue,citecolor=blue,urlcolor=blue]{hyperref}
\usepackage{enumitem}
\usepackage{mathtools}

\newtheorem{theorem}{Theorem}[section]
\newtheorem{lemma}[theorem]{Lemma}
\newtheorem{proposition}[theorem]{Proposition}
\newtheorem{corollary}[theorem]{Corollary}
\newtheorem{remark}[theorem]{Remark}
\newtheorem{definition}[theorem]{Definition}

\newcommand{\CC}{\mathbb C}
\newcommand{\Hess}{\operatorname{Hess}}
\newcommand{\adj}{\operatorname{adj}}
\newcommand{\grad}{\nabla}
\newcommand{\tr}{\operatorname{tr}}

\title{The Quartic Hessian Conjecture in Dimension Four}
\author{Zixiang Ni\\
\href{mailto:akie.camellia@gmail.com}{\texttt{akie.camellia@gmail.com}}}
\date{\today}

\begin{document}
\maketitle

\begin{abstract}
The Hessian conjecture asks whether a polynomial with nonzero constant Hessian determinant has a polynomial gradient inverse.  It is known in dimensions at most three, false in dimensions at least five, and open in dimension four. We prove its four-variable quartic case.

The top homogeneous part has zero Hessian determinant and, by the four-dimensional homogeneous Hesse theorem, is a cone.  We divide its cone representative into three exhaustive types: a genuinely ternary quartic with nonzero ternary Hessian, a genuinely binary quartic, and a fourth power of a linear form.  In the first type, the degree-seven determinant equation forces the cubic part to be affine-linear in the cone direction.  In the binary type, the degree-six equation gives a constant null direction in a two-variable Hessian of the cubic part.  In the unary type, the degree-five equation and a constant-direction lemma give the same conclusion.  Every type therefore reduces to
\[ f=P(x_1,x_2,x_3)+x_4Q(x_1,x_2,x_3)+a x_4^2, \qquad \deg Q\leq2. \]
We prove, independently of the degree or top part of \(P\), that every constant-Hessian polynomial of this form has a polynomial gradient inverse. The branch \(a\ne0\) descends from the known three-dimensional Hessian conjecture after a Schur complement.  When \(a=0\), an isotropic-cone rank analysis eliminates rank two, solves the rank-one exception by an explicit triangular inverse, and reduces rank zero to the two-dimensional Hessian conjecture.  The coupled degree-six identity is retained throughout; no component with respect to a fixed quadratic form is separated.
\end{abstract}

\tableofcontents

\section{Introduction}

For \(f\in\CC[x_1,\ldots,x_n]\), write
\[
 \Hess(f)=(\partial_i\partial_jf)_{i,j},
 \qquad \grad f=(\partial_1f,\ldots,\partial_nf).
\]

\begin{definition}[Hessian conjecture, \(\mathrm{HC}_n\)]
If \(\det\Hess(f)\in\CC^\times\), then
\(\grad f:\CC^n\to\CC^n\) is a polynomial automorphism.
\end{definition}

The conjecture is elementary for \(n=1\), was proved for \(n=2\) by Dillen
\cite{Dillen1991}, and was proved for \(n=3\) by de Bondt
\cite{deBondt2015}.  It is false for \(n\geq5\) \cite{MengYang2026}, and is
open only for \(n=4\).  Moreover,
\(\mathrm{HC}_4\) implies the plane Jacobian conjecture by the standard
doubling construction \cite{Meng2006}.  In degree at most two the gradient is
affine, while a cubic potential has quadratic gradient and is covered by
Wang's theorem on quadratic Keller maps \cite{Wang1980}.  Thus degree four is
the first unresolved degree in dimension four.  A related boundary case was
already known: Hubbers proved the Jacobian conjecture for cubic-homogeneous
Keller maps in dimension four \cite{Hubbers1994}.  After normalizing the
invertible linear part of the gradient, this covers potentials of the form
\(f_4+f_2+f_1+f_0\), with \(f_i\) homogeneous of degree \(i\): if
\(A=\Hess(f_2)\), then constancy of the Hessian determinant gives
\(\det A\ne0\), and
\(A^{-1}(\grad f-\grad f(0))=x+H_3(x)\) with \(H_3\) cubic homogeneous.  The
result below allows an arbitrary cubic part as well, and hence treats every
quartic potential.

\begin{theorem}[Main theorem]\label{thm:main}
Let \(f\in\CC[x_1,x_2,x_3,x_4]\) have degree four.  If
\[
 \det\Hess(f)\in\CC^\times,
\]
then \(\grad f\) is a polynomial automorphism of \(\CC^4\).
\end{theorem}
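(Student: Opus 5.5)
The plan follows the route announced in the abstract. Write \(f=f_4+f_3+f_2+f_1+f_0\) with \(f_i\) homogeneous of degree \(i\). The entries of \(\Hess(f)\) have degree at most two, so \(\det\Hess(f)\) splits into homogeneous pieces of degrees \(8,7,\ldots,0\). All pieces of positive degree vanish, and the constant piece is nonzero. The degree-eight piece gives \(\det\Hess(f_4)=0\). By the four-dimensional homogeneous Hesse (Gordan--Noether) theorem, \(f_4\) is then a cone: after a linear change of variables, which preserves both the hypothesis and the conclusion, \(f_4\) is independent of \(x_4\). I then split by the essential number of variables of \(f_4\), measured after the cone normalization. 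Either \(f_4\) is a genuine ternary quartic with nonzero ternary Hessian, or it is a genuine binary quartic, or it equals \(\ell^4\) for a linear form \(\ell\). A ternary quartic with vanishing ternary Hessian is again a cone, so it falls into one of the other two types; the three types are therefore exhaustive.

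In each type I extract the next nontrivial homogeneous piece of the determinant identity. For the ternary type, the degree-seven piece is a cofactor expansion. The leading cofactor is \(\det\Hess_{x_1x_2x_3}(f_4)\ne0\), and it multiplies \(\partial_4^2 f_3\); the cofactor terms involving \(\partial_4\partial_i f_3\) enter only at higher order in the cone direction. From this I aim to show \(\partial_4^2 f_3=0\), i.e.\ \(f_3\) is affine-linear in \(x_4\). For the binary type the degree-seven piece vanishes automatically. The degree-six piece then says that the \(2\times2\) Hessian of \(f_3\) in the two non-cone variables \(x_3,x_4\) has a nonzero binary-quartic Hessian multiplier. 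This should force that \(2\times2\) block to have a constant null vector, which I absorb by a linear change in \(x_3,x_4\). For the unary type, the degree-five identity involves \(\ell^2\) times a \(3\times3\) minor of \(\Hess(f_3)\) in the complementary directions. I would prove a lemma: a polynomial matrix of linear forms, symmetric Hessian type, whose determinant vanishes identically has a constant kernel direction. This is the kind of statement that holds in small size for Hessians of cubics. Applying it and iterating gives the same conclusion. In all three types the goal is the normal form
\[ f=P(x_1,x_2,x_3)+x_4Q(x_1,x_2,x_3)+ax_4^2,\qquad \deg Q\le 2 .\]

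Second, I treat the normal form directly, without further use of the degree of \(P\). Here \(\partial_4 f=Q+2ax_4\).

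If \(a\ne0\), the substitution \(y_4=Q+2ax_4\) solves \(x_4\) polynomially. A Schur complement gives
\[\det\Hess f=2a\det\Hess\bigl(P-Q^2/(4a)\bigr),\]
so the three-variable potential \(P-Q^2/(4a)\) has constant Hessian determinant. De Bondt's \(\mathrm{HC}_3\) then inverts its gradient, and composing yields a polynomial inverse of \(\grad f\).

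If \(a=0\), then \(\det\Hess f=-\grad Q^{T}\adj(\Hess P)\grad Q\). Write \(\grad Q=Bx+b\), so \(\Hess Q=B\) is a constant symmetric matrix. I would split by \(\operatorname{rank}B\).
\begin{itemize}
\item Rank three: the map \(x\mapsto\grad Q\) is onto, so the determinant would be a quadratic form in \(\adj\Hess P\) evaluated on arbitrary vectors. Using the degree expansion of \(P\), I expect this case to force a contradiction.
\item Rank two: I would use the isotropic cone of \(B\) to show that the identity cannot be constant, eliminating this case.
\item Rank one: \(Q\) essentially depends on a single linear form. I expect an explicit triangular inverse here.
\item Rank zero: \(Q\) is linear, so after a change of coordinates \(Q=x_1\), and \(\det\Hess f=-\det\Hess_{x_2x_3}P\). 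This is constant, so Dillen's \(\mathrm{HC}_2\) applies fiberwise in \(x_1\), and \(x_1\) itself is recovered from \(\partial_4 f\).
\end{itemize}

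I expect the main obstacle to be the \(a=0\) analysis in ranks two and three, together with the unary type in the reduction step. In both places the constant-determinant identity couples every homogeneous component of \(P\), so no single degree can be isolated. My first attempt would be to work on the isotropic cone \(\{v: v^TBv=0\}\) and restrict \(\adj\Hess P\) there, comparing top-degree terms.
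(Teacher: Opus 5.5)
Your outline follows the paper's architecture, but your treatment of the normal form has a systematic error that leaves its hard cases unproved. In \(f=P+x_4Q+ax_4^2\) the \(x\)-block of the Hessian is \(\Hess_x(P)+x_4M\) with \(M=\Hess_x(Q)\), and you dropped the \(x_4M\) term in both branches.

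For \(a\ne0\) the Schur complement is \(\det\Hess f=2a\det(\Hess R+zM)\), where \(R=P-Q^2/(4a)\) and \(z=x_4+Q/(2a)\). The first three outputs are \(\grad_x(R+zQ)\), not \(\grad R\). Constancy of \(\det\Hess R\) does survive by setting \(z=0\), but \(R\) is the wrong potential. Inverting \(\grad R\) by \(\mathrm{HC}_3\) and composing works only when \(M=0\). In general you must invert the family \(\grad_x(R+zQ)\) polynomially in \(z\). The paper does this by applying \(\mathrm{HC}_3\) over \(\CC(z)\) (Lemma~\ref{lem:charzero-transfer}) and descending the formal inverse to \(\CC[z]\) (Lemma~\ref{lem:descent}). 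Your fiberwise use of \(\mathrm{HC}_2\) in rank zero needs the same step.

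For \(a=0\) the correct identity is \(c=-v^{\mathsf T}\adj(\Hess_xP+x_4M)v\) with \(v=\grad Q\). Expanding it in powers of \(x_4\) is exactly the idea missing from the ranks you left as expectations.
\begin{itemize}
\item Rank three: the \(x_4^2\)-coefficient gives \(v^{\mathsf T}\adj(M)v=0\). By Lemma~\ref{lem:adj-quadratic} its degree-two part is \(\det M\cdot x^{\mathsf T}Mx\), so this rank is excluded at once.
\item Rank two: with \(Q=\tfrac12(x_1^2+x_2^2)\), the \(x_4\)-coefficient \(-(x_1^2+x_2^2)\partial_3^2P\) forces \(P=P_0(x_1,x_2)+x_3S(x_1,x_2)\). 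Then \(c=(x_1\partial_2S-x_2\partial_1S)^2\), which vanishes at the origin.
\item Rank one: take \(Q=\tfrac12x_1^2+\alpha x_2\); the case \(\alpha=0\) fails because \(c\) becomes divisible by \(x_1^2\). For \(\alpha\ne0\) the same steps give \(x_1\partial_2S-\alpha\partial_1S=\kappa\in\CC^\times\). Hence \(S=\kappa s+F(q)\) in the polynomial coordinates \(q=Q\), \(s=-x_1/\alpha\), and the inverse is triangular.
\end{itemize}
Your isotropic-cone plan is applied to an incorrect identity. Without this \(x_4\)-grading it offers no mechanism for these cases.

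The unary reduction rests on a false lemma. A \(3\times3\) partial Hessian \(\Hess_y(F)\) of a cubic \(F(x_1;y)\) with vanishing determinant need not have a constant kernel vector. For \(F=y_1^2y_2+x_1y_1y_3\),
\[
 \Hess_y(F)=\begin{pmatrix}2y_2&2y_1&x_1\\2y_1&0&0\\x_1&0&0\end{pmatrix}
\]
has zero determinant, yet its kernel is spanned by \((0,x_1,-2y_1)\). Gordan--Noether yields a constant kernel vector for full Hessians, not for partial ones. What is true, and all the reduction needs, is a constant \(v\) with \(v^{\mathsf T}\Hess_y(F)v=\partial_v^2F=0\); here \(v=e_2\) or \(e_3\) works. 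The paper proves this in Lemma~\ref{lem:constant-direction}. It applies Gordan--Noether to \(F|_{x_1=0}\), splits by the number of essential variables, and in each case reads off one coefficient of the exact expansion of \(\det\Hess_y(F)\) in powers of \(x_1\).

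Your ternary and binary reductions are sound. In the ternary case the entries \(\partial_4\partial_if_3\), \(i\le3\), simply do not occur in degree seven.
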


The proof has two logically separate parts.  First, the top homogeneous form is
classified and each possible type is shown to produce a constant direction in
the cubic part.  Second, a uniform proposition in that direction proves
invertibility without any nondegeneracy assumption on the top form.

\begin{remark}[The corrected degree-six equation]\label{rem:degree6-correction}
In the genuinely ternary branch, write
\[
 f_4=\varphi(x_1,x_2,x_3),\qquad
 f_3=g(x_1,x_2,x_3)+x_4h(x_1,x_2,x_3),
\]
and \(A=\Hess_3(\varphi)\).  If the \(x_4^2\)-coefficient of \(f_2\) is \(a\),
the degree-six determinant equation is
\[
 2a\det A-(\grad h)^{\mathsf T}\adj(A)\grad h=0.
\]
The two terms may cancel: \(h\) is a fixed polynomial, not a free parameter.
The proof never separates them and does not attempt to deduce \(a=0\).
\end{remark}

\section{Preliminaries}\label{sec:prelim}

\subsection{Coordinate changes and cone tops}

All changes of variables below are constant invertible linear maps or source
translations.  If \(\widetilde f=f\circ T\), then
\[
 \grad\widetilde f(x)=T^{\mathsf T}(\grad f)(Tx),\qquad
 \Hess(\widetilde f)(x)=T^{\mathsf T}\Hess(f)(Tx)T.
\]
Thus the Hessian determinant changes only by a nonzero constant factor and
gradient invertibility is preserved.  Source translations have the analogous
affine-conjugacy property and do not alter the top homogeneous part.

\begin{lemma}[Top homogeneous Hessian]\label{lem:top-hessian}
Let \(f=\sum_{k=0}^d f_k\) be the decomposition into homogeneous parts, with
\(d\geq3\).  If \(\det\Hess(f)\) is constant, then
\[
 \det\Hess(f_d)\equiv0.
\]
\end{lemma}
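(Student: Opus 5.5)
The plan is to compare homogeneous components of \(\det\Hess(f)\) by degree. Since \(f=\sum_{k=0}^d f_k\) with \(d\geq3\), each Hessian entry decomposes as
\[
 \partial_i\partial_j f=\sum_{k=2}^{d}\partial_i\partial_j f_k,
\]
and \(\partial_i\partial_j f_k\) is homogeneous of degree \(k-2\) (or zero). Thus \(\Hess(f)=\sum_{k}H_k\), where \(H_k=\Hess(f_k)\) has entries homogeneous of degree \(k-2\), and the top piece is \(H_d=\Hess(f_d)\), with entries of degree \(d-2\geq1\).

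I then expand \(\det\Hess(f)\) by multilinearity in the columns (or via the Leibniz formula). Every product \(\prod_i (\Hess f)_{i,\sigma(i)}\) splits into terms that are products of entries of various \(H_k\). The only terms of total degree \(n(d-2)\) are those in which every factor comes from \(H_d\), because every other choice has strictly smaller degree. Hence the homogeneous component of \(\det\Hess(f)\) of degree \(n(d-2)\) is exactly \(\det\Hess(f_d)\), and every other contribution has degree strictly less than \(n(d-2)\).

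Because \(d\geq3\), we have \(n(d-2)\geq n\geq1\). A constant polynomial has zero homogeneous component in every positive degree. So if \(\det\Hess(f)\) is constant, the degree-\(n(d-2)\) component vanishes, which gives \(\det\Hess(f_d)\equiv0\). This holds whether the constant is zero or nonzero.

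No step is a real obstacle. The one point needing care is that the degree-\(n(d-2)\) component of the determinant is exactly \(\det\Hess(f_d)\), with no cross terms. This follows because the homogeneous degree of a product is additive and \(H_d\) is the unique piece of maximal degree. The hypothesis \(d\geq3\) is used only to make that degree positive: for \(d=2\) the top Hessian is itself constant, and the statement would fail.
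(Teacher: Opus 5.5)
Your proposal is correct and follows the paper's own proof: you identify the degree-\(n(d-2)\) homogeneous component of \(\det\Hess(f)\) as exactly \(\det\Hess(f_d)\), then note that a constant has no positive-degree component. Your remarks on the role of \(d\geq3\) and on the \(d=2\) failure are accurate additions.
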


\begin{proof}
The entries of \(\Hess(f_k)\) have degree \(k-2\).  The homogeneous component
of maximal possible degree \(n(d-2)\) in the determinant is obtained by taking
\(\Hess(f_d)\) in every entry, hence equals \(\det\Hess(f_d)\).  A constant
determinant has no positive-degree component.
\end{proof}

\begin{theorem}[Homogeneous Hesse theorem in low dimension]\label{thm:hesse}
Let \(h\) be homogeneous in \(2\leq n\leq4\) variables over \(\CC\).  If
\(\det\Hess(h)=0\), then after a constant invertible linear change, \(h\)
depends on at most \(n-1\) variables.
\end{theorem}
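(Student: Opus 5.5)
This is the classical Gordan–Noether result, and I would prove it by invoking the Gordan–Noether analysis with a separate argument for each \(n\). The organizing equivalence is the following. A homogeneous \(h\) depends on at most \(n-1\) variables after a linear change if and only if the first partials \(\partial_1h,\ldots,\partial_nh\) are linearly dependent over \(\CC\). Indeed, a relation \(\sum c_i\partial_ih=0\) says \(h\) is constant along the direction \(c\); completing \(c\) to a basis makes \(h\) independent of that coordinate. On the other hand, \(\det\Hess(h)=0\) says exactly that the partials are \emph{algebraically} dependent, by the Jacobian criterion applied to \(\grad h\). So the statement reduces to showing that, for \(n\le4\), algebraic dependence of the partials forces linear dependence.

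For \(n=2\), I would argue directly. Factor the binary form \(h\) into linear factors. Vanishing of \(\det\Hess(h)\) means \(\partial_1h\) and \(\partial_2h\) are algebraically dependent homogeneous polynomials of the same degree, so their ratio is constant. Hence they are proportional, which gives linear dependence. Alternatively, the Euler identity \(x\cdot\grad h=d\,h\) combined with degeneracy forces \(h=\ell^d\).

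For \(n=3,4\), I would follow Gordan–Noether. Consider the polar map \(\grad h:\mathbb P^{n-1}\dashrightarrow\mathbb P^{n-1}\). Its image \(Z\) is a proper subvariety because the Hessian vanishes. If \(Z\) is contained in a hyperplane, we get linear dependence and we are done. Otherwise, the key Gordan–Noether identity says the following: for a generic point \(x\) and any \(\xi\) in the kernel of \(\Hess(h)(x)\), or more precisely in the orthogonal of the tangent space of \(Z\) at \(\grad h(x)\), the function \(h\) is invariant along \(x+t\xi\), i.e. \(h(x+t\xi)=h(x)\). From this, when \(\dim Z\le n-3\) (for \(n=4\) this means \(Z\) is a curve or a point), one produces enough vector fields to show that the fibres are linear spaces and that the dual variety structure makes \(Z\) degenerate. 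For \(n=3\), \(Z\) is a curve, the fibres of the polar map are lines through a common point, and one shows that this point is a vertex of \(h\). For \(n=4\), the main obstacle is the case where \(Z\) is a surface in \(\mathbb P^3\). There I would use that the fibres are lines meeting \(Z^\ast\), and that \(Z\) must then be a cone or contained in a plane. I would try first to show that all fibre lines pass through a fixed point, which would be a vertex of \(h\).

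Since this is a classical theorem (Gordan–Noether 1876, with modern proofs by Lossen, Watanabe, and de Bondt), in the paper I would cite it rather than reprove it, and note that it fails for \(n\ge5\) (Gordan–Noether counterexamples), which is why the hypothesis \(n\le4\) appears.
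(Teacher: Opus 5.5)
Your proposal ends exactly where the paper's proof does: the statement is the $n\le4$ case of the Gordan--Noether theorem and is settled by citation (the paper cites Gordan--Noether and de Bondt's modern proof). Your $n=2$ argument and your reduction to ``algebraically dependent partials must be linearly dependent'' are correct. Be aware, though, that your $n=3,4$ sketch would not stand on its own. The decisive step, that $Z^\ast$ is a single point so that all fibres of the polar map pass through a common vertex, is asserted rather than proved (classically it comes from $\grad h\equiv0$ on $Z^\ast$ together with $\dim\operatorname{Sing}V(h)\le n-3$, which needs $h$ reduced and still leaves the case $n=4$, $\dim Z=2$, $Z^\ast$ a curve to be worked out). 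Also, ``$Z$ is a cone'' is not the right target, since you need $Z$ to lie in a hyperplane.
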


\begin{proof}
This is the low-dimensional case of the classical theorem of Gordan and
Noether \cite{GordanNoether1876}; see de Bondt
\cite[Theorem~3.1]{deBondtSingular2018} for a modern statement and proof in
dimensions at most four.
\end{proof}

\begin{lemma}[Binary forms with zero Hessian]\label{lem:binary-zero}
Let \(h\in\CC[X,Y]\) be homogeneous of degree \(d\geq2\).  Then
\[
 h_{XX}h_{YY}-h_{XY}^2\equiv0
 \quad\Longleftrightarrow\quad
 h=\kappa L^d
\]
for some \(\kappa\in\CC\) and linear form \(L\), with \(h=0\) allowed.
\end{lemma}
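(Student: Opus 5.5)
The direction \(\Leftarrow\) is a direct computation.  If \(h=\kappa L^d\) with \(L=\alpha X+\beta Y\), then
\[
 h_{XX}=\kappa d(d-1)\alpha^2L^{d-2},\qquad h_{YY}=\kappa d(d-1)\beta^2L^{d-2},\qquad h_{XY}=\kappa d(d-1)\alpha\beta L^{d-2}.
\]
Hence \(h_{XX}h_{YY}-h_{XY}^2=\kappa^2d^2(d-1)^2L^{2d-4}(\alpha^2\beta^2-\alpha^2\beta^2)=0\).  The case \(h=0\) is trivial.

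For \(\Rightarrow\), I would factor \(h\) over \(\CC\) into linear forms, \(h=\prod_i L_i^{m_i}\) with the \(L_i\) pairwise non-proportional.  The goal is to show that only one distinct factor occurs.  I would avoid Theorem~\ref{thm:hesse} here, since the binary case is elementary and may even be an input to it.

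The argument I plan to use is Euler's identity.  For \(h\) of degree \(d\), we have \(Xh_X+Yh_Y=dh\).  Differentiating gives
\[
 Xh_{XX}+Yh_{XY}=(d-1)h_X,\qquad Xh_{XY}+Yh_{YY}=(d-1)h_Y.
\]
So \(\Hess(h)\,(X,Y)^{\mathsf T}=(d-1)\grad h\).  If \(\det\Hess(h)\equiv0\) and \(h\ne0\), the Hessian has rank at most one at each point.

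First suppose \(\Hess(h)\equiv0\) identically.  Then \(\grad h=0\), so \(h\) is constant, which contradicts \(d\ge2\) unless \(h=0\).  Otherwise the Hessian has rank exactly one on a dense open set.  There its image is spanned by a single vector \(v(X,Y)\), and \(\grad h\) is proportional to \(v\).

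A cleaner route is to pass to an affine chart.  After a linear change we may assume \(h\) is not divisible by \(Y\), and set \(p(t)=h(t,1)\), a polynomial of degree exactly \(d\).  Writing \(h=Y^dp(X/Y)\), a routine computation gives
\[
 h_{XX}h_{YY}-h_{XY}^2=Y^{2d-4}\bigl(d(d-1)pp''-(d-1)^2p'^2\bigr)(X/Y).
\]
The condition therefore becomes \(d\,p\,p''=(d-1)p'^2\).

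Now let \(t_0\) be a root of \(p\) of multiplicity \(m\), with \(1\le m\le d\).  Comparing the lowest-order terms at \(t_0\), the left side begins with \(d\,m(m-1)c^2(t-t_0)^{2m-2}\) and the right side with \((d-1)m^2c^2(t-t_0)^{2m-2}\).  These must agree, so \(dm(m-1)=(d-1)m^2\), which forces \(m=d\).

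Hence \(p=c(t-t_0)^d\) and \(h=cY^d(X/Y-t_0)^d=c(X-t_0Y)^d\).  Taking \(\kappa=c\) and \(L=X-t_0Y\) gives the claimed form.

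The main point needing care is the chart computation, namely the Hessian formula for the dehomogenization \(Y^dp(X/Y)\).  I would verify it via the identities \(h_X=Y^{d-1}p'\) and \(h_Y=Y^{d-1}(dp-tp')\) with \(t=X/Y\), then differentiate once more.  Alternatively, one can use Euler's relations to express \(h_{XY}\) and \(h_{YY}\) in terms of \(h_{XX}\) and the first derivatives, which avoids the explicit derivative bookkeeping.
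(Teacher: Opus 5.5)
Your proof is correct, and its core matches the paper's. You dehomogenize via $h=Y^dp(X/Y)$ and use the same chart formula $h_{XX}h_{YY}-h_{XY}^2=(d-1)Y^{2d-4}\bigl(dpp''-(d-1)p'^2\bigr)(X/Y)$, which reduces everything to the equation $dpp''=(d-1)p'^2$.

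Where you differ is in how you solve that equation. The paper passes to the logarithmic derivative $q=p'/p\in\CC(t)$ and obtains $dq'+q^2=0$. It then integrates $(1/q)'=1/d$ and uses that the constants of $\CC(t)$ are $\CC$. This needs no preliminary coordinate change, because the solution $q=0$ (constant $p$) automatically yields $h=\kappa Y^d$.

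You instead compare coefficients of $(t-t_0)^{2m-2}$ at a root of multiplicity $m$. This gives $dm(m-1)=(d-1)m^2$, hence $m=d$. Your argument is purely local and avoids integration in $\CC(t)$. The cost is that you first normalize $Y\nmid h$, so that $\deg p=d$ and a root exists. That normalization is in fact dispensable: if $\deg p<d$, every root would have multiplicity below $d$, so $p$ is a nonzero constant and $h=cY^d$.

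Your paragraph on Euler's identity and rank-one Hessians is never used and can be deleted.
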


\begin{proof}
On \(Y\ne0\), write \(h(X,Y)=Y^dp(t)\), \(t=X/Y\).  Direct differentiation
gives
\[
 \det\Hess_2(h)
 =(d-1)Y^{2d-4}\bigl(dpp''-(d-1)(p')^2\bigr).
\]
Suppose \(p\ne0\) and put \(q=p'/p\in\CC(t)\).  Dividing the vanishing
equation by \(p^2\) gives
\[
 d q'+q^2=0.
\]
If \(q=0\), then \(p\) is constant.  Otherwise
\((1/q)'=1/d\), so \(q=d/(t-\alpha)\) for some \(\alpha\in\CC\).  Therefore
\[
 \left(\frac{p}{(t-\alpha)^d}\right)'=0,
\]
and the constant field of \(\CC(t)\) is \(\CC\).  Thus
\(p=\kappa(t-\alpha)^d\).  Homogenization gives the assertion.  The case
\(p=0\) is immediate, as is the converse.
\end{proof}

\subsection{Matrix identities and inverse descent}

\begin{lemma}[Block determinant]\label{lem:block-det}
For a square matrix \(B\) and a column vector \(v\) of the same size,
\[
 \det\begin{pmatrix}B&v\\v^{\mathsf T}&0\end{pmatrix}
 =-v^{\mathsf T}\adj(B)v.
\]
\end{lemma}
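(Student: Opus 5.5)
The plan is to expand the determinant of the bordered matrix along its last row and then its last column, or equivalently to use multilinearity in the border. Let \(B\) be \(m\times m\) with entries \(b_{ij}\), let \(v=(v_1,\ldots,v_m)^{\mathsf T}\), and let \(M\) denote the \((m+1)\times(m+1)\) bordered matrix.

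First I will expand \(\det M\) by cofactors along the last row \((v_1,\ldots,v_m,0)\). The corner entry is zero, so
\[
 \det M=\sum_{j=1}^m (-1)^{m+1+j}v_j\det M_{m+1,j}.
\]
Here \(M_{m+1,j}\) is \(M\) with row \(m+1\) and column \(j\) deleted. This minor is an \(m\times m\) matrix whose columns are the columns of \(B\) other than the \(j\)-th, followed by the column \(v\).

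Second, I will expand each \(\det M_{m+1,j}\) along its last column \(v\). The entry \(v_i\) in row \(i\), column \(m\) of the minor has complementary minor \(B_{ij}\), namely \(B\) with row \(i\) and column \(j\) deleted. This gives
\[
 \det M_{m+1,j}=\sum_i(-1)^{i+m}v_i\det B_{ij}.
\]
Combining the two expansions, the total sign is \((-1)^{2m+1+i+j}=-(-1)^{i+j}\). Hence
\[
 \det M=-\sum_{i,j}v_i(-1)^{i+j}\det B_{ij}\,v_j .
\]
Since \(\adj(B)_{ji}=(-1)^{i+j}\det B_{ij}\), the double sum equals \(\sum_{i,j}v_i\,\adj(B)_{ji}\,v_j=v^{\mathsf T}\adj(B)^{\mathsf T}v\). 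A quadratic form is unchanged by transposing its matrix, so this is \(v^{\mathsf T}\adj(B)v\), which gives the claimed identity.

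The only real obstacle is the sign bookkeeping in the double cofactor expansion. As a cross-check, I will also verify the identity by a density argument. When \(B\) is invertible, the Schur complement gives
\[
 \det M=\det B\,(0-v^{\mathsf T}B^{-1}v)=-v^{\mathsf T}\adj(B)v.
\]
Both sides are polynomials in the entries of \(B\) and \(v\), and invertible matrices are Zariski dense. The identity therefore holds for all \(B\), including singular \(B\), which is the case needed later. This route could serve as the proof itself, and it avoids the sign computation entirely.
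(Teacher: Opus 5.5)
Your proposal is correct and is essentially the paper's own proof, which consists of the same two routes: cofactor expansion along the last row and column, or polynomial continuation of the Schur-complement formula from invertible \(B\). Your sign count \((-1)^{2m+1+i+j}=-(-1)^{i+j}\) and the identification \(\adj(B)_{ji}=(-1)^{i+j}\det B_{ij}\) are right. The lemma is later applied to matrices with polynomial entries. For that use, the cofactor route already works over any commutative ring, or you can read the density argument as giving a formal identity in indeterminate entries.
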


\begin{proof}
Expand along the last row and column, or use the polynomial continuation of
the Schur-complement formula from invertible \(B\).
\end{proof}

\begin{lemma}[Adjugate quadratic form]\label{lem:adj-quadratic}
For a symmetric \(3\times3\) matrix \(M\) and column vectors \(x,\ell\),
\[
 (Mx+\ell)^{\mathsf T}\adj(M)(Mx+\ell)
 =\det(M)\bigl(x^{\mathsf T}Mx+2\ell^{\mathsf T}x\bigr)
  +\ell^{\mathsf T}\adj(M)\ell.
\]
\end{lemma}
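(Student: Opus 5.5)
The identity is purely algebraic, so the plan is to expand the left side and reduce each term with the two defining properties of the adjugate: \(\adj(M)M=M\adj(M)=\det(M)I\), and \(\adj(M)\) is symmetric whenever \(M\) is. No invertibility of \(M\) is needed, so the argument runs for all symmetric \(M\), including singular ones.

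First, expanding the quadratic form by bilinearity gives
\[
 (Mx+\ell)^{\mathsf T}\adj(M)(Mx+\ell)
 =x^{\mathsf T}M^{\mathsf T}\adj(M)Mx
 +x^{\mathsf T}M^{\mathsf T}\adj(M)\ell
 +\ell^{\mathsf T}\adj(M)Mx
 +\ell^{\mathsf T}\adj(M)\ell .
\]
Second, I use \(M^{\mathsf T}=M\) together with \(M\adj(M)=\adj(M)M=\det(M)I\) to simplify each term. The first term becomes \(\det(M)\,x^{\mathsf T}Mx\). The second becomes \(\det(M)\,x^{\mathsf T}\ell\), and the third becomes \(\det(M)\,\ell^{\mathsf T}x\). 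Since these last two are scalars and transposes of each other, they combine to \(2\det(M)\,\ell^{\mathsf T}x\). The fourth term stays as it is, and the result is exactly the right-hand side of the claimed formula.

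The only step that needs care is the cross terms. Rewriting \(M^{\mathsf T}\adj(M)\) as \(\det(M)I\) relies on the symmetry of \(M\); for a nonsymmetric \(M\) one would instead get \(\ell^{\mathsf T}\adj(M)Mx+x^{\mathsf T}M^{\mathsf T}\adj(M)\ell\), which need not simplify. Once symmetry is used there, everything else is a direct application of the adjugate identity. The dimension \(3\) plays no special role, and the same computation holds verbatim for symmetric \(n\times n\) matrices.
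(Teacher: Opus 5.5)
Your proposal is correct and follows essentially the same route as the paper: expand by bilinearity, then use \(M\adj(M)=\adj(M)M=\det(M)I\) together with the symmetry of \(M\) to reduce the quadratic and cross terms. Your remarks that no invertibility is needed and that the computation works verbatim for symmetric \(n\times n\) matrices are also correct.
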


\begin{proof}
Expand by bilinearity and use
\(M\adj(M)=\adj(M)M=\det(M)I\), together with symmetry.
\end{proof}

\begin{lemma}[Descent of a polynomial inverse]\label{lem:descent}
Let \(R=\CC[t_1,\ldots,t_m]\), \(K=\operatorname{Frac}(R)\), and
\(F\in R[x_1,\ldots,x_n]^n\).  Assume
\[
 \det J_xF\in R^\times=\CC^\times,\qquad
 F^{-1}\in K[y_1,\ldots,y_n]^n.
\]
Then \(F^{-1}\in R[y_1,\ldots,y_n]^n\).
\end{lemma}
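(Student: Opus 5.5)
The plan is to show that the inverse \(G=F^{-1}\), whose coefficients a priori lie in \(K\), has coefficients that are polynomial in \(t\). I will do this by specializing the parameters and invoking uniqueness of inverses. First I record the formal-inverse structure. Since \(\det J_xF=c\in\CC^\times\), the map \(F\) is étale everywhere, for every specialization of \(t\). The composition \(G\circ F=x\) holds in \(K[x]^n\), and likewise \(F\circ G=y\) holds in \(K[y]^n\).

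Write \(G=\sum_\alpha g_\alpha(t)\,y^\alpha\) with \(g_\alpha\in K\), and let \(D\in R\setminus\{0\}\) be a common denominator of the finitely many nonzero coefficients \(g_\alpha\). Then \(G\) is polynomial over the localization \(R[1/D]\). For every point \(\tau\in\CC^m\) with \(D(\tau)\ne0\), specialization gives \(G_\tau\circ F_\tau=x\). So \(F_\tau\) is a polynomial automorphism of \(\CC^n\) with inverse \(G_\tau\), and \(\deg G_\tau\le\deg_y G=:e\).

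The main obstacle is controlling the coefficients near the zero locus of \(D\). My approach is to produce an inverse over \(R\) directly, using the formal inverse. Translate so that \(F(0)\) is handled: set \(\tilde F=F-F(0)\), where \(F(0)\in R^n\). Its linear part \(J_xF(0)\) has determinant \(c\), which is a unit, so \(A:=J_xF(0)\in GL_n(R)\). Then \(A^{-1}\tilde F=x+(\text{higher order})\) with coefficients in \(R\).

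The formal inverse of a map of the form \(x+\)higher terms is computed recursively, degree by degree, using only ring operations. So it lies in \(R[[y]]^n\). By uniqueness of formal inverses over \(K[[y]]\), it equals the polynomial inverse expansion of \(A^{-1}\tilde F\) coming from \(G\), namely \(y\mapsto G(Ay+F(0))\), which is polynomial over \(K\). Hence the coefficients of \(G(Ay+F(0))\) lie in \(K\cap R=R\) coefficientwise, and it is a polynomial. Undoing the change of variables, \(G(y)=\bigl[G(A\,\cdot+F(0))\bigr]\bigl(A^{-1}(y-F(0))\bigr)\), and \(A^{-1}\) and \(F(0)\) are over \(R\). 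Therefore \(G\in R[y]^n\).

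Given this, the specialization argument above becomes unnecessary. The heart of the proof is simply that the formal inverse of a map with unit Jacobian at the origin has coefficients in the base ring, combined with uniqueness of compositional inverses in \(K[[y]]^n\).
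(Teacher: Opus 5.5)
Your proof is correct and follows essentially the same route as the paper: normalize by the affine target change $y\mapsto A^{-1}(y-F(0))$ with $A=J_xF(0)\in GL_n(R)$, note that the formal inverse of $x+H(x)$ is computed recursively over $R$, and identify it with the given polynomial inverse over $K$ by uniqueness of formal inverses. The opening specialization discussion is superfluous, as you say yourself, and can simply be dropped.
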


\begin{proof}
Subtract \(F(0)\) from the target and let \(L=J_xF(0)\).  Since
\(\det L\in\CC^\times\), an affine target change reduces to
\[
 \widetilde F(x)=x+H(x),\qquad
 H\in R[x]^n,\qquad \operatorname{ord}_xH\geq2.
\]
The unique formal inverse has the expansion
\[
 \widetilde F^{-1}(y)=y+\sum_{d\geq2}G_d(y),
\]
where \(G_d\) is homogeneous of \(y\)-degree \(d\).  Comparing total degree in
\(\widetilde F(\widetilde F^{-1}(y))=y\) recursively expresses each \(G_d\)
over \(R\).  The inverse over \(K\) is this same formal inverse and is a
polynomial, so only finitely many \(G_d\) are nonzero.  Undoing the affine
target changes proves the claim.
\end{proof}

\begin{lemma}[Transfer to characteristic-zero fields]
\label{lem:charzero-transfer}
If \(\mathrm{HC}_n\) holds over \(\CC\), then it holds over every field of
characteristic zero.
\end{lemma}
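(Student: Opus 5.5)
The plan is a standard specialization argument.

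Let \(k\) be a field of characteristic zero and \(f\in k[x_1,\ldots,x_n]\) with \(\det\Hess(f)=c\in k^\times\). Only finitely many elements of \(k\) occur as coefficients of \(f\). Let \(k_0=\mathbb{Q}(\text{coefficients of }f)\), a finitely generated field extension of \(\mathbb{Q}\).

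First we reduce to a subfield of \(\CC\). A finitely generated extension of \(\mathbb{Q}\) has transcendence degree at most countable and cardinality at most countable. Hence \(k_0\) embeds into \(\CC\): choose a transcendence basis, send it to algebraically independent complex numbers (possible since \(\CC\) has uncountable transcendence degree), and extend the resulting embedding over the finite algebraic extension, using that \(\CC\) is algebraically closed. Fix such an embedding \(\sigma:k_0\hookrightarrow\CC\) and write \(f^\sigma\) for the image polynomial. Since \(\sigma\) is a ring homomorphism, \(\det\Hess(f^\sigma)=\sigma(c)\in\CC^\times\). By \(\mathrm{HC}_n\) over \(\CC\), the map \(\grad f^\sigma\) has a polynomial inverse \(G\in\CC[y]^n\).

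Next we pull the inverse back to \(k_0\); this is the only real obstacle. The inverse \(G\) a priori has complex coefficients, not necessarily in \(\sigma(k_0)\). I would argue via the unique formal inverse, exactly as in Lemma~\ref{lem:descent}. Over \(k_0\), normalize \(F=\grad f\) by an affine target change to \(x+H(x)\); this is possible because \(J_xF(0)=\Hess(f)(0)\) has nonzero determinant \(c\). The formal inverse \(y+\sum_{d\ge2}G_d\) then has homogeneous components \(G_d\) computed recursively with coefficients in \(k_0\). Applying \(\sigma\) coefficientwise to this recursion yields the recursion for \(\grad f^\sigma\) over \(\CC\). By uniqueness of formal inverses, \(\sigma(G_d)\) equals the degree-\(d\) component of the normalized \(G\). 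That component vanishes for \(d>\deg G\), and \(\sigma\) is injective, so \(G_d=0\) for large \(d\). Thus the formal inverse over \(k_0\) is a polynomial.

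Finally, undoing the affine normalization gives a polynomial inverse of \(\grad f\) over \(k_0\subseteq k\). Its composition identities with \(\grad f\) hold over \(k_0\), hence over \(k\). So \(\grad f:k^n\to k^n\) is a polynomial automorphism, proving \(\mathrm{HC}_n\) over \(k\).
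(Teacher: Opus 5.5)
Your proposal is correct and follows the paper's proof essentially step for step. Like the paper, you embed a finitely generated coefficient field into \(\CC\) via a transcendence basis and apply \(\mathrm{HC}_n\) there; you then descend the complex inverse by noting that the normalized formal inverse is computed recursively over the small field and coincides with the polynomial inverse (your \(k_0=\mathbb{Q}(\text{coefficients of } f)\) automatically contains \(c^{-1}\), which the paper adjoins explicitly).
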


\begin{proof}
Let \(K\) have characteristic zero and let \(f\in K[x_1,\ldots,x_n]\) have
nonzero constant Hessian determinant.  Choose a subfield
\(k\subset K\), finitely generated over \(\mathbb Q\), which contains the
coefficients of \(f\) and the inverse of that determinant.  Every finitely
generated characteristic-zero field embeds in \(\CC\): choose a
transcendence basis over \(\mathbb Q\), send it to algebraically independent
complex numbers, and extend across the resulting finite algebraic extension.
After such an embedding, \(\mathrm{HC}_n\) over \(\CC\) supplies a polynomial
inverse of \(\grad f\).

To descend it, subtract \(\grad f(0)\) from the target and normalize the
invertible linear part.  The coefficients of the unique formal inverse are
then determined recursively from coefficients in \(k\), exactly as in Lemma
\ref{lem:descent}; hence they lie in \(k\).  The polynomial inverse over
\(\CC\) coincides with this formal inverse, so only finitely many of these
coefficients are nonzero.  It therefore belongs to \(k[y]^n\), and hence gives
an inverse over \(K\).
\end{proof}

\section{Classification of quartic cone tops}\label{sec:classification}

Let \(f=f_4+f_3+f_2+f_1+f_0\) have degree four and nonzero constant Hessian
determinant.  Lemma \ref{lem:top-hessian} and Theorem \ref{thm:hesse} allow a
linear change after which
\[
 f_4=\varphi(x_1,x_2,x_3).
\]
There are three exhaustive cases.

\begin{enumerate}[leftmargin=2em]
\item If \(\det\Hess_3(\varphi)\not\equiv0\), the top is
  \emph{genuinely ternary}.
\item If \(\det\Hess_3(\varphi)\equiv0\), Theorem \ref{thm:hesse} in three
  variables makes \(\varphi\) binary.  If its binary Hessian is nonzero, the
  top is \emph{genuinely binary}.
\item If the binary Hessian also vanishes, Lemma \ref{lem:binary-zero} gives
  \(\varphi=cL^4\).  Since \(f\) has degree four, \(c\ne0\); after a linear
  change the top is \emph{unary}, \(f_4=cx_1^4\).
\end{enumerate}

The next two sections show that all three cases lead to the same structural
form.

\section{The genuinely ternary top}\label{sec:ternary}

Assume
\[
 f_4=\varphi(x_1,x_2,x_3),\qquad
 A=\Hess_3(\varphi),\qquad \det A\not\equiv0.
\]
Put
\[
 H_4=\Hess(f_4)=\begin{pmatrix}A&0\\0&0\end{pmatrix},
 \qquad H_k=\Hess(f_k).
\]

\subsection{The degree-seven equation}

The degree-seven part of \(\det(H_4+H_3+H_2)\) contains three entries of
\(H_4\) and one of \(H_3\).  It is the first variation
\[
 \tr\bigl(\adj(H_4)H_3\bigr).
\]
The adjugate of \(H_4\) has only one possibly nonzero entry, namely
\((\adj H_4)_{44}=\det A\).  Therefore
\[
 0=\det A\,\partial_{44}f_3.
\]
The polynomial ring is a domain and \(\det A\ne0\), so
\[
 f_3=g(x_1,x_2,x_3)+x_4h(x_1,x_2,x_3),
\]
where \(g\) is cubic and \(h\) is quadratic.

\subsection{The coupled degree-six equation}

Write
\[
 f_2=q(x_1,x_2,x_3)+x_4\ell(x_1,x_2,x_3)+a x_4^2,
\]
where \(q\) is quadratic, \(\ell\) is linear, and \(a\in\CC\).  Degree six
has exactly two types of contribution:
\[
 (2,2,2,0)\quad\text{and}\quad(2,2,1,1),
\]
where the numbers denote entry degrees.  The first gives
\(2a\det A\).  In the second, the two \(H_4\)-entries exhaust the first three
coordinates except for one row and column, while the last row and column come
from \(\Hess(x_4h)\).  The block expansion gives
\[
 -(\grad h)^{\mathsf T}\adj(A)\grad h.
\]
Thus
\begin{equation}\label{eq:coupled-degree-six}
 2a\det A-(\grad h)^{\mathsf T}\adj(A)\grad h=0.
\end{equation}
No separation of its two fixed summands is made.

Combining the preceding forms and absorbing \(f_1,f_0\) gives the following.

\begin{proposition}[Ternary structural reduction]\label{prop:ternary-structure}
In the genuinely ternary case, after the cone-coordinate change,
\[
 f=P(x_1,x_2,x_3)+x_4Q(x_1,x_2,x_3)+a x_4^2,
 \qquad \deg Q\leq2.
\]
\end{proposition}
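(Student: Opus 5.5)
The argument only assembles the pieces already obtained in this section; no new determinant computation is needed. After the linear change provided by Lemma \ref{lem:top-hessian} and Theorem \ref{thm:hesse}, we have \(f_4=\varphi(x_1,x_2,x_3)\), independent of \(x_4\), with \(\det A\not\equiv0\). The degree-seven equation then gives
\[
 f_3=g(x_1,x_2,x_3)+x_4h(x_1,x_2,x_3),
\]
so \(f_3\) is affine-linear in \(x_4\) and \(\deg h\le 2\). The quadratic part is written in full generality as \(f_2=q+x_4\ell+ax_4^2\), which is only the expansion of an arbitrary quadratic form in powers of \(x_4\). Finally, \(f_1\) and \(f_0\) split uniquely as \(f_1=m(x_1,x_2,x_3)+bx_4\) and \(f_0=c\), with \(m\) linear and \(b,c\in\CC\).

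Collecting the terms by powers of \(x_4\), we set
\[
 P=\varphi+g+q+m+c,\qquad Q=h+\ell+b,
\]
and the coefficient of \(x_4^2\) is \(a\). The terms \(f_4\), \(f_3\), \(f_1\), \(f_0\) contribute no \(x_4^2\) or higher powers of \(x_4\): \(f_4\) is free of \(x_4\), the term \(\partial_{44}f_3=0\) by the degree-seven step, and \(f_1\), \(f_0\) have degree at most one. Hence
\[
 f=P(x_1,x_2,x_3)+x_4Q(x_1,x_2,x_3)+ax_4^2,
\]
and \(\deg Q\le 2\) because \(h\), \(\ell\), \(b\) have degrees \(2\), \(1\), \(0\) respectively.

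Two points deserve a remark. First, the coordinate change is the single linear map from Theorem \ref{thm:hesse}. By the preliminaries on coordinate changes, it preserves constancy and nonvanishing of the Hessian determinant, as well as gradient invertibility, so the degree-seven analysis legitimately applies in the new coordinates. Second, the coupled degree-six identity \eqref{eq:coupled-degree-six} is not used at all here. The form being established allows any \(a\), including \(a=0\), so there is no need to separate its two summands, in line with Remark \ref{rem:degree6-correction}.

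The only substantive step is the degree-seven identity \(\det A\,\partial_{44}f_3=0\). It comes from the fact that \(\adj(H_4)\) has \(\det A\) as its sole nonzero entry, in position \((4,4)\), and it concludes because the polynomial ring is a domain. That identity was established just before the statement, so the proof of the proposition reduces to the bookkeeping above.
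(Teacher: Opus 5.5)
Your proof is correct and follows the paper's argument: the degree-seven identity \(\det A\,\partial_{44}f_3=0\) makes \(f_3\) affine-linear in \(x_4\), and the rest is expanding \(f_2\), \(f_1\), \(f_0\) in powers of \(x_4\) and collecting terms, with the coupled degree-six identity unused. Your explicit assignments \(P=\varphi+g+q+m+c\) and \(Q=h+\ell+b\) just spell out what the paper calls ``absorbing \(f_1,f_0\)''.
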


\section{The degenerate cone tops}\label{sec:degenerate}

\subsection{The genuinely binary top}

Assume
\[
 f_4=\varphi(x_1,x_2),\qquad
 A=\Hess_2(\varphi),\qquad \det A\not\equiv0.
\]
Put \(X=(x_1,x_2)\) and \(Z=(x_3,x_4)\).  Then
\[
 H_4=\begin{pmatrix}A&0\\0&0\end{pmatrix}.
\]

\begin{lemma}[Binary-top degree-six identity]\label{lem:binary-degree-six}
\[
 [\det\Hess(f)]_6
 =\det A\cdot\det\Hess_Z(f_3).
\]
\end{lemma}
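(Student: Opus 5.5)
The plan is to expand \(\det\Hess(f)=\det(H_4+H_3+H_2)\) multilinearly in the columns (or, equivalently, as a sum over permutations). Each term is a product of four entries, and the \(j\)-th entry is taken from \(H_{k_j}\) with \(k_j\in\{4,3,2\}\). Such an entry is homogeneous of degree \(k_j-2\in\{2,1,0\}\), so the degree-six component collects exactly the terms whose entry degrees sum to six. The admissible degree patterns are \((2,2,2,0)\), \((2,2,1,1)\) and \((2,1,1,1,\dots)\)-type patterns with more than two \(H_4\) entries; the only ones that are actually possible are those listed below.

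The structural input is that \(H_4\) vanishes outside the upper-left \(2\times2\) block indexed by \(X=(x_1,x_2)\). Any term containing three or more \(H_4\)-entries would need three distinct rows in \(\{1,2\}\), which is impossible, so these terms vanish. This is the key simplification, and it rules out \((2,2,2,0)\). The only surviving pattern with total degree six is therefore \((2,2,1,1)\): exactly two \(H_4\)-entries and two \(H_3\)-entries.

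In a permutation term, the two \(H_4\)-entries must occupy rows \(\{1,2\}\) and columns \(\{1,2\}\). The permutation therefore splits as a permutation of \(\{1,2\}\) and a permutation of \(\{3,4\}\), and the signs multiply. Summing over these permutations factors the degree-six component as
\[
\det A\cdot\det\bigl((H_3)_{ij}\bigr)_{i,j\in\{3,4\}},
\]
and the second factor is \(\det\Hess_Z(f_3)\). The Laplace expansion along the first two rows gives the same result directly: only the minor on rows and columns \(\{1,2\}\) contributes a degree-four factor, namely \(\det A\), and its complementary minor must then be of degree two, coming entirely from \(H_3\).

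The only point needing care is confirming that no mixed term, say one \(H_4\)-entry off the block combined with other entries, contributes. This follows immediately from the zero pattern of \(H_4\), so I expect no real obstacle; the proof is a degree-bookkeeping argument mirroring the degree-seven computation in the ternary case.
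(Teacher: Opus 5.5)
Your proposal is correct and follows the paper's proof essentially verbatim. The support of $H_4$ on the $X\times X$ block kills the $(2,2,2,0)$ terms, and in the $(2,2,1,1)$ terms the two $H_4$-entries exhaust the $X$-rows and columns, so the permutation sum factors as $\det A\cdot\det\Hess_Z(f_3)$. The only slip is the stray ``$(2,1,1,1,\dots)$'' pattern in your first paragraph: it has degree five, not six, and should be dropped, since $(2,2,2,0)$ and $(2,2,1,1)$ are the only degree-six patterns.
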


\begin{proof}
A degree-six determinant term has entry degrees \(2+2+1+1\) or
\(2+2+2+0\).  A nonzero permutation term cannot contain three \(H_4\)-entries,
because \(H_4\) is supported on only the two \(X\)-rows and columns.  With two
\(H_4\)-entries, those entries exhaust both \(X\)-rows and columns, leaving
exactly the \(Z\times Z\) minor of \(H_3\).  Summing the two block minors gives
the displayed product with positive sign.
\end{proof}

Since the determinant is constant and the polynomial ring is a domain,
\[
 \det\Hess_Z(f_3)=0.
\]
Write
\[
 \Hess_Z(f_3)=\begin{pmatrix}\alpha&\beta\\\beta&\delta\end{pmatrix},
\]
where \(\alpha,\beta,\delta\) are linear forms and
\(\alpha\delta=\beta^2\).  If \(\alpha\ne0\), primality of the linear
polynomial \(\alpha\) gives \(\beta=\mu\alpha\) and
\(\delta=\mu^2\alpha\) for some \(\mu\in\CC\).  If \(\alpha=0\), then
\(\beta=0\).  Thus, including the zero matrix, there are a linear form
\(\lambda\) and a constant vector \(r\in\CC^2\) such that
\[
 \Hess_Z(f_3)=\lambda rr^{\mathsf T}.
\]
Choose a nonzero constant \(w\in\ker(r^{\mathsf T})\).  Then
\(\partial_w^2f_3=0\).  A constant change among \(x_3,x_4\) makes \(w\) the
fourth coordinate direction.  The top remains independent of that coordinate,
and \(f_3\) is affine-linear in it.  Since \(f_2\) is quadratic, the whole
polynomial has the form
\begin{equation}\label{eq:common-structure}
 f=P(x_1,x_2,x_3)+x_4Q(x_1,x_2,x_3)+a x_4^2,
 \qquad \deg Q\leq2.
\end{equation}

\subsection{The unary top}

Assume now
\[
 f_4=cx_1^4,\qquad c\ne0,\qquad y=(x_2,x_3,x_4).
\]
Here \(H_4=\operatorname{diag}(12cx_1^2,0,0,0)\).  Every nonzero degree-five
determinant term contains one entry of \(H_4\) and three entries of \(H_3\);
terms with two \(H_4\)-entries vanish.  Hence
\[
 [\det\Hess(f)]_5
 =12cx_1^2\det\Hess_y(f_3)=0,
 \qquad \det\Hess_y(f_3)=0.
\]

\begin{lemma}[Constant-direction lemma]\label{lem:constant-direction}
Let \(F(x_1;y_1,y_2,y_3)\) be homogeneous of degree three.  If
\[
 \det\Hess_y(F)=0,
\]
then there is a nonzero constant vector \(v\in\CC^3\) such that
\(\partial_v^2F=0\).
\end{lemma}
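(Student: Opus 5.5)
The plan is to split \(F\) by its degree in \(x_1\) and read off the coefficients of \(\det\Hess_y(F)\) as a polynomial in \(x_1\). Write
\[
 F=c_0x_1^3+x_1^2F_1(y)+x_1F_2(y)+F_3(y),
\]
where \(F_k\) is homogeneous of degree \(k\) in \(y\). Then
\[
 \Hess_y(F)=x_1M+N(y),\qquad M=\Hess_y(F_2),\qquad N=\Hess_y(F_3),
\]
with \(M\) a constant symmetric matrix and \(N\) linear in \(y\). For a constant \(v\) we have
\[
 \partial_v^2F=x_1\,v^{\mathsf T}Mv+v^{\mathsf T}N v,
\]
so the goal is a nonzero \(v\) with \(v^{\mathsf T}Mv=0\) and \(\partial_v^2F_3=0\). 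The cubic term \(x_1^3\) in \(\det(x_1M+N)\) is \(\det M\), so \(\det M=0\). The \(x_1^2\) term is \(\tr(\adj(M)N)\), and the \(x_1\) term is \(\tr(M\adj(N))\). The \(x_1^0\) term gives \(\det N=0\), so by Theorem \ref{thm:hesse} in three variables \(F_3\) depends on at most two linear forms. I will split into cases on \(\operatorname{rank}M\leq2\).

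\emph{Rank two.} Here \(\adj(M)=kk^{\mathsf T}\) for a nonzero \(k\) spanning \(\ker M\), after rescaling \(k\) by a possibly complex constant. The \(x_1^2\) coefficient then reads \(k^{\mathsf T}Nk=0\), that is, \(\partial_k^2F_3=0\). Also \(k^{\mathsf T}Mk=0\) because \(Mk=0\). Hence \(v=k\) works.

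\emph{Rank at most one.} Write \(M=uu^{\mathsf T}\), allowing \(u=0\); any rank-one symmetric complex matrix has this form. Now \(\adj(M)=0\), so the \(x_1^2\) coefficient gives nothing. Choose coordinates in which \(F_3=F_3(y_1,y_2)\), and let \(B=\Hess_{(y_1,y_2)}(F_3)\). Then \(N=\begin{pmatrix}B&0\\0&0\end{pmatrix}\), and \(\adj(N)\) has only the entry \((\adj N)_{33}=\det B\). The \(x_1\) coefficient becomes \(u_3^2\det B=0\).

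If \(\det B\not\equiv0\), then \(u_3=0\). In that case \(v=e_3\) satisfies \(u^{\mathsf T}v=0\) and \(\partial_{e_3}F_3=0\), which is what we need. If \(\det B\equiv0\), Lemma \ref{lem:binary-zero} gives \(F_3=\kappa L^3\) for a linear form \(L\). Every \(v\) with \(\partial_vL=0\) kills \(F_3\), and these \(v\) form a plane. This plane meets the plane \(u^\perp\), or lies in it if \(u=0\), in a nonzero vector \(v\), and that \(v\) works.

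The main obstacle is the rank-one case. There the isotropy condition \(u^{\mathsf T}v=0\) and the condition \(\partial_v^2F_3=0\) are not obviously compatible. The degree-one coefficient \(\tr(M\adj N)\) is exactly what links them, once \(F_3\) has been normalized to two variables by Theorem \ref{thm:hesse}.
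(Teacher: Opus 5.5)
Your proof is correct, but it organizes the cases differently from the paper. Both arguments use the same coefficient expansion of $\det(x_1M+N)$.

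The paper applies Theorem~\ref{thm:hesse} to the $x_1$-free cubic first and splits on its number of essential variables:
\begin{itemize}
\item If that cubic is genuinely binary, the coefficient of $x_1$ is $M_{33}\det B$, exactly as in your computation. This forces $M_{33}=0$ and gives $v=e_3$ for every $M$, with no rank assumption.
\item If it is $\kappa y_1^3$, the coefficient of $x_1^2$ is $6\kappa y_1$ times the determinant of the lower-right $2\times2$ block of $M$. That block is therefore singular, and $v$ is taken in its kernel.
\item If it is $0$, any $v\in\ker M$ works.
\end{itemize}

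You split first on $\operatorname{rank}M$ instead. The distinctive step is your rank-two branch. Since $\adj(M)=kk^{\mathsf T}$ with $Mk=0$, the $x_1^2$ coefficient is literally $\partial_k^2F_3$. So $v=k$ works with no normalization of $F_3$ and no use of the Hesse theorem.

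In rank at most one you do need Theorem~\ref{thm:hesse} and Lemma~\ref{lem:binary-zero}. In the subcase $F_3=\kappa L^3$, the $x_1^2$ coefficient carries no information because $\adj(M)=0$. You replace it with a dimension count. This works precisely because $v^{\mathsf T}Mv=(u^{\mathsf T}v)^2$ is then a single linear condition, and two planes in $\CC^3$ meet nontrivially.

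The trade-off is this. The paper's route needs no case split on $M$ and produces an explicit $v$ in each case. Yours makes the generic case $\operatorname{rank}M=2$ coordinate-free and confines the Hesse theorem to the degenerate case.

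One minor point: when $F_3=0$, fix a nonzero $L$ so that $\{v:\partial_vL=0\}$ really is a plane. Alternatively, note that any nonzero $v\in u^\perp$ then works.
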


\begin{proof}
Write
\[
 F=g_3(y)+x_1q_2(y)+x_1^2\ell_1(y)+\gamma x_1^3,
 \qquad A(y)=\Hess(g_3),\qquad M=\Hess(q_2).
\]
Then \(\Hess_y(F)=A(y)+x_1M\).  At \(x_1=0\), \(\det A=0\).
Theorem \ref{thm:hesse} makes \(g_3\) depend on at most two \(y\)-variables
after a constant change.

Suppose first that the essential variable number of \(g_3\) is two.  Write
\[
 A(y)=\begin{pmatrix}A_2&0\\0&0\end{pmatrix},\qquad
 M=\begin{pmatrix}B&m\\m^{\mathsf T}&\mu\end{pmatrix}.
\]
Lemma \ref{lem:binary-zero} shows
\(\Delta=\det A_2\not\equiv0\); otherwise \(g_3\) would be a cube of one
linear form.  The exact expansion
\[
 \det\left(
 \begin{pmatrix}A_2&0\\0&0\end{pmatrix}+tM\right)
 =t\mu\det(A_2+tB)-t^2m^{\mathsf T}\adj(A_2+tB)m
\]
has coefficient \(t\mu\Delta\).  Hence \(\mu=0\), and \(v=e_3\) satisfies
\(v^{\mathsf T}(A+x_1M)v=0\).

If the essential variable number is one, take
\(g_3=\kappa y_1^3\), \(\kappa\ne0\).  Let \(N\) be the lower-right
\(2\times2\) block of \(M\).  Then
\[
 \det\bigl(\operatorname{diag}(6\kappa y_1,0,0)+tM\bigr)
 =6\kappa y_1t^2\det N+t^3\det M.
\]
Thus \(\det N=0\).  Any nonzero
\(v\in\ker N\subset\operatorname{span}(e_2,e_3)\) satisfies
\(v^{\mathsf T}Av=v^{\mathsf T}Mv=0\).

Finally, if \(g_3=0\), then \(t^3\det M=0\), so any nonzero
\(v\in\ker M\) works.
\end{proof}

Apply the lemma to \(F=f_3\) and make a constant change among \(y\) so that
the fourth coordinate is \(v\).  Then \(\partial_{44}f_3=0\), while \(f_4\)
is independent of \(x_4\).  Expanding the quadratic and lower parts again
gives \eqref{eq:common-structure}.

\begin{proposition}[Degenerate structural reduction]\label{prop:degenerate-structure}
Both the genuinely binary and unary quartic cone tops reduce, after a constant
linear change, to \eqref{eq:common-structure}.
\end{proposition}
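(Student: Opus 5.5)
The proposition collects the two arguments already given in this section, so the plan is to assemble them and check that each step is a legitimate coordinate change. Throughout I work after the cone-coordinate change of Section~\ref{sec:classification}. The top \(f_4\) depends only on the variables it is stated in, and all changes are constant invertible linear maps, which preserve constancy of the Hessian determinant and gradient invertibility.

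For the genuinely binary top, the first step is Lemma~\ref{lem:binary-degree-six}. Constancy of \(\det\Hess(f)\) kills its degree-six component, and since \(\det A\not\equiv0\) in the domain \(\CC[x]\), this forces \(\det\Hess_Z(f_3)=0\). The next step is the factorization argument given above. The entries \(\alpha,\beta,\delta\) are linear forms with \(\alpha\delta=\beta^2\). Primality of a nonzero linear form gives \(\Hess_Z(f_3)=\lambda rr^{\mathsf T}\), and the zero case is included. I would then choose \(0\ne w\perp r\), so that \(\partial_w^2f_3=0\). One point to check is that \(\partial_w^2f_3\) is the quadratic form \(w^{\mathsf T}\Hess_Z(f_3)w\), which is valid because \(w\) lies in the \(Z\)-span. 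A linear change inside \(\operatorname{span}(x_3,x_4)\) sending \(w\) to \(e_4\) leaves \(f_4=\varphi(x_1,x_2)\) unchanged.

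For the unary top \(f_4=cx_1^4\), I would first compute the degree-five component as \(12cx_1^2\det\Hess_y(f_3)\) and conclude \(\det\Hess_y(f_3)=0\). Lemma~\ref{lem:constant-direction} then produces \(v\in\CC^3\) with \(\partial_v^2f_3=0\). A linear change among \(y=(x_2,x_3,x_4)\) taking \(v\) to \(e_4\) again fixes \(f_4\).

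In both cases the closing step is identical. Since \(f_4\) is independent of \(x_4\), \(f_3\) is affine-linear in \(x_4\), and \(f_2,f_1,f_0\) have degree at most two, the \(x_4\)-expansion of \(f\) reads \(P(x_1,x_2,x_3)+x_4Q+ax_4^2\). Here \(Q\) gathers the \(x_4\)-linear parts of \(f_3\), \(f_2\) and \(f_1\), so \(\deg Q\le2\), and the only \(x_4^2\) term comes from \(f_2\), with constant coefficient. The main point to verify is the degree-five claim in the unary case: a permutation term containing two entries of \(H_4\) vanishes because \(H_4\) has a single nonzero entry, and a term with none has degree at most three. With that checked, the proposition follows.
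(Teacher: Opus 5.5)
Your proposal is correct and follows the paper's argument step for step. In the binary case it uses the degree-six identity of Lemma~\ref{lem:binary-degree-six} and the factorization \(\Hess_Z(f_3)=\lambda rr^{\mathsf T}\); in the unary case it uses the degree-five identity and Lemma~\ref{lem:constant-direction}; both then reach \eqref{eq:common-structure} by the same \(x_4\)-expansion. One trivial slip: a determinant term with no \(H_4\)-entry has degree at most four (four \(H_3\)-entries), not three, which still excludes it from the degree-five component.
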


\section{The uniform inversion proposition}\label{sec:uniform}

\begin{proposition}[Uniform inversion]\label{prop:uniform}
Let
\[
 f=P(x_1,x_2,x_3)+uQ(x_1,x_2,x_3)+a u^2,
 \qquad \deg Q\leq2,
\]
where \(P\) is arbitrary.  If
\(\det\Hess(f)=c\in\CC^\times\), then \(\grad f\) is a polynomial
automorphism.
\end{proposition}

\begin{proof}
Write \(x=(x_1,x_2,x_3)^{\mathsf T}\) and set
\[
 M=\Hess_x(Q),\qquad
 \ell=\grad Q(0),\qquad
 v=\grad Q=Mx+\ell,\qquad
 B=\Hess_x(P)+uM.
\]
Then
\begin{equation}\label{eq:block-hessian}
 \Hess(f)=\begin{pmatrix}B&v\\v^{\mathsf T}&2a\end{pmatrix}.
\end{equation}

\subsection*{The branch \(a\ne0\)}

Put
\[
 z=u+\frac{Q(x)}{2a},\qquad
 R(x)=P(x)-\frac{Q(x)^2}{4a},\qquad
 S_z(x)=R(x)+zQ(x).
\]
The output coordinates satisfy
\[
 Y_4=Q(x)+2au=2az,\qquad
 (Y_1,Y_2,Y_3)=\grad_xS_z(x).
\]
The derivative in the original gradient is taken at fixed \(u\), whereas
\(\grad_xS_z\) is taken at fixed \(z\); the two expressions agree after
substitution.  Direct differentiation at fixed \(z\), followed by the
substitution \(z=u+Q/(2a)\), gives
\[
 \Hess_x(S_z)=B-\frac1{2a}vv^{\mathsf T}.
\]
The Schur complement of the lower-right entry in
\eqref{eq:block-hessian}, followed by the triangular substitution between
\((x,u)\) and \((x,z)\), yields the polynomial identity
\[
 \det\Hess_x(S_z)=\frac{c}{2a}\in\CC^\times.
\]

Over \(K=\CC(z)\), the known complex case \(\mathrm{HC}_3\), together with
Lemma \ref{lem:charzero-transfer}, gives a polynomial inverse of
\(\grad_xS_z\).
Its Jacobian determinant is already a unit over
\(R=\CC[z]\), so Lemma \ref{lem:descent} gives
\[
 (\grad_xS_z)^{-1}
 =H(z,Y_1,Y_2,Y_3)\in\CC[z,Y_1,Y_2,Y_3]^3.
\]
Consequently
\[
 z=\frac{Y_4}{2a},\qquad
 x=H(z,Y_1,Y_2,Y_3),\qquad
 u=z-\frac{Q(x)}{2a}
\]
is a polynomial inverse.

\subsection*{The branch \(a=0\)}

Now \(f=P+uQ\), and Lemma \ref{lem:block-det} gives
\begin{equation}\label{eq:uniform-block-det}
 c=-v^{\mathsf T}\adj\bigl(\Hess_x(P)+uM\bigr)v.
\end{equation}
The coefficient of \(u^2\) is
\[
 v^{\mathsf T}\adj(M)v=0.
\]
Lemma \ref{lem:adj-quadratic} and comparison of the degree-two part in \(x\)
imply that either \(M=0\) or \(\det M=0\).  Thus a nonzero \(M\) has rank one
or two.

\subsubsection*{Rank two}

After congruence, translation in \(\operatorname{im}M\), and removal of a
constant in \(Q\), we may assume
\[
 M=\operatorname{diag}(1,1,0),\qquad
 Q=\frac12(x_1^2+x_2^2),\qquad v=(x_1,x_2,0)^{\mathsf T}.
\]
Indeed, the constant part of
\(v^{\mathsf T}\adj(M)v=0\) first forces the kernel component of the affine
part of \(\grad Q\) to vanish.  Write \(t_{ij}=\partial_i\partial_jP\).
Expanding \eqref{eq:uniform-block-det} gives
\[
\det\Hess(f)= -\Bigl[
 x_1^2\bigl((t_{22}+u)t_{33}-t_{23}^2\bigr)
 +2x_1x_2(t_{13}t_{23}-t_{12}t_{33})
 +x_2^2\bigl((t_{11}+u)t_{33}-t_{13}^2\bigr)
 \Bigr].
\]
The coefficient of \(u\) is
\(-(x_1^2+x_2^2)t_{33}\), hence \(t_{33}=0\).  Therefore
\[
 P=R(x_1,x_2)+x_3S(x_1,x_2).
\]
The constant part becomes
\[
 c=(x_1S_{x_2}-x_2S_{x_1})^2,
\]
whose right-hand side vanishes at \(x_1=x_2=0\), a contradiction.

\subsubsection*{Rank one}

Put \(M=\operatorname{diag}(1,0,0)\) and rename the variables
\((x,y,z)\).  A translation in \(x\) removes the image component of the
affine part of \(\grad Q\); a linear change in the \((y,z)\)-plane makes its
kernel component \((\alpha,0)\), and a constant in \(Q\) only translates the
fourth output.  We may therefore normalize
\[
 Q=\frac12x^2+\alpha y,\qquad v=(x,\alpha,0)^{\mathsf T}.
\]
If \(\alpha=0\), then
\[
 c=-x^2(t_{22}t_{33}-t_{23}^2),
\]
which cannot be a nonzero constant.  Suppose \(\alpha\ne0\).  The coefficient
of \(u\) in \eqref{eq:uniform-block-det} is \(-\alpha^2t_{33}\), so
\[
 P=R(x,y)+zS(x,y).
\]
The constant part is
\begin{equation}\label{eq:rank-one-square}
 c=(xS_y-\alpha S_x)^2.
\end{equation}
Choose \(\kappa\in\CC^\times\) with \(\kappa^2=c\).  Since the polynomial ring
is a domain, change the sign of \(\kappa\) if necessary so that
\[
 xS_y-\alpha S_x=\kappa.
\]
Define
\[
 q=\frac12x^2+\alpha y,\qquad
 s=-\frac{x}{\alpha},\qquad
 D=x\partial_y-\alpha\partial_x.
\]
Then \(Dq=0\), \(Ds=1\), and \((q,s)\) is a polynomial coordinate system:
\[
 x=-\alpha s,\qquad
 y=\frac{q-\frac12x^2}{\alpha}.
\]
Thus \(D=\partial_s\) and
\[
 S=\kappa s+F(q),\qquad F\in\CC[t].
\]

The last two gradient outputs are \(Y_3=S\) and \(Y_4=q\).  Hence
\[
 q=Y_4,\qquad
 s=\frac{Y_3-F(Y_4)}{\kappa},\qquad
 x=-\alpha s,\qquad
 y=\frac{q-\frac12x^2}{\alpha}.
\]
Put
\[
 A_1=Y_1-R_x(x,y),\qquad A_2=Y_2-R_y(x,y).
\]
Then
\[
 \binom{A_1}{A_2}
 =\begin{pmatrix}S_x&x\\S_y&\alpha\end{pmatrix}
  \binom{z}{u},
 \qquad
 \det\begin{pmatrix}S_x&x\\S_y&\alpha\end{pmatrix}
 =\alpha S_x-xS_y=-\kappa.
\]
Thus \(z,u\) are recovered polynomially.  This solves the rank-one branch
rather than excluding it by a top-form hypothesis.

\subsubsection*{Rank zero}

Finally, \(M=0\) makes \(Q=\ell^{\mathsf T}x+q_0\) affine-linear.  If
\(\ell=0\), the last Hessian row and column vanish, so
\(\det\Hess(f)=0\), contrary to \(c\ne0\).  Otherwise take
\(\ell=(0,0,\lambda)\), \(\lambda\ne0\), by a linear change.  Equation
\eqref{eq:uniform-block-det} becomes
\[
 c=-\lambda^2\det\Hess_{x_1,x_2}(P)\in\CC^\times.
\]
Over \(K=\CC(x_3)\), the known complex case \(\mathrm{HC}_2\), together with
Lemma \ref{lem:charzero-transfer}, gives a polynomial inverse for
\[
 F=(P_{x_1},P_{x_2}).
\]
Lemma \ref{lem:descent}, with \(R=\CC[x_3]\), makes its coefficients
polynomial in \(x_3\).  The full inverse is then
\[
 x_3=\frac{Y_4-q_0}{\lambda},\qquad
 (x_1,x_2)=F^{-1}(x_3,Y_1,Y_2),\qquad
 u=\frac{Y_3-P_{x_3}(x_1,x_2,x_3)}{\lambda}.
\]
All ranks and both values of \(a\) are covered.
\end{proof}

\section{Proof of the main theorem}

\begin{proof}[Proof of Theorem \ref{thm:main}]
By Lemma \ref{lem:top-hessian},
\(\det\Hess(f_4)=0\).  Theorem \ref{thm:hesse} reduces \(f_4\) to at most
three variables.  The classification of Section \ref{sec:classification} is
exhaustive.

If the cone top is genuinely ternary, Proposition
\ref{prop:ternary-structure} gives the common structural form.  If it is
genuinely binary or unary, Proposition \ref{prop:degenerate-structure} gives
the same form.  Proposition \ref{prop:uniform} then proves that \(\grad f\) is
a polynomial automorphism in every case.
\end{proof}

\begin{corollary}[Quartic \(\mathrm{HC}_4\)]\label{cor:quartic-hc4}
The Hessian conjecture in dimension four holds for every potential of degree
at most four.
\end{corollary}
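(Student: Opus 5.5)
The corollary follows from Theorem~\ref{thm:main} once the potentials of degree at most three are added, so the plan is to split by \(\deg f\). Let \(f\in\CC[x_1,\ldots,x_4]\) with \(\det\Hess(f)\in\CC^\times\).

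\emph{Degree four.} This is exactly Theorem~\ref{thm:main}, so nothing more is needed.

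\emph{Degree at most two.} The Hessian is a constant matrix \(A\) with \(\det A\ne0\). Then \(\grad f(x)=Ax+b\) is an invertible affine map, and its inverse \(y\mapsto A^{-1}(y-b)\) is polynomial.

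\emph{Degree three.} Here \(\grad f\) has components of degree at most two and constant nonzero Jacobian determinant \(\det\Hess(f)\). I would first subtract \(\grad f(0)\) and compose with the inverse of the invertible linear part \(\Hess(f)(0)\). This puts the map in the form \(x+H_2(x)\), where \(H_2\) is quadratic homogeneous (there is no linear part beyond the identity, and the constant has been removed). Wang's theorem \cite{Wang1980} on quadratic Keller maps then applies and shows the map is an automorphism. Undoing the affine target normalization gives a polynomial inverse of \(\grad f\). The reduction here is the same normalization already sketched in the introduction.

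The only point that needs care is in the degree-three case. Wang's theorem is usually stated for maps of degree at most two with constant nonzero Jacobian, which our map satisfies directly, so even the normalization step can be skipped. With that, there is no real obstacle: the corollary is a bookkeeping combination of Theorem~\ref{thm:main} with these classical low-degree facts.
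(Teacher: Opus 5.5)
Your proposal is correct and matches the paper's proof: both split by degree, treating degree at most two as the affine-gradient case, degree three via Wang's theorem on quadratic Keller maps, and degree four via Theorem~\ref{thm:main}. You are also right that the normalization to \(x+H_2(x)\) is optional, since \(\grad f\) already has degree at most two and constant nonzero Jacobian determinant \(\det\Hess(f)\).
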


\begin{proof}
Degrees at most two are affine-gradient cases, degree three follows from
Wang's quadratic Keller-map theorem, and degree four is Theorem
\ref{thm:main}.
\end{proof}

\section{Discussion}

\subsection{What remains open}

Theorem \ref{thm:main} proves the complete quartic case, not the full
four-dimensional Hessian conjecture.  For degree at least five, the top form
is still a cone by Theorem \ref{thm:hesse}, but the determinant contains more
homogeneous layers and the degree-seven/six and degree-six/five arguments above
do not carry over verbatim.

Proposition \ref{prop:uniform}, however, has no degree bound on \(P\).
Consequently it already proves arbitrary-degree cases which admit a coordinate
form
\[
 P(x_1,x_2,x_3)+x_4Q(x_1,x_2,x_3)+a x_4^2,\qquad \deg Q\leq2.
\]

\subsection{Relation to the plane Jacobian conjecture}

For a Keller map \(F:\CC^n\to\CC^n\), the potential
\[
 \Phi(x,y)=y^{\mathsf T}F(x)
\]
satisfies
\[
 \det\Hess(\Phi)=(-1)^n(\det JF)^2,\qquad
 \grad\Phi(x,y)=\bigl(JF(x)^{\mathsf T}y,F(x)\bigr).
\]
Thus \(\mathrm{HC}_{2n}\) implies \(\mathrm{JC}_n\).  In particular, the full
\(\mathrm{HC}_4\) would imply the still-open \(\mathrm{JC}_2\), but the
quartic result here addresses only a bounded-degree part of
\(\mathrm{HC}_4\).

\section*{Acknowledgments}

Generative-AI systems---GPT-5.6 Sol, GPT-5.6 Luna, Claude Fable~5, and
DeepSeek V4 Pro---were used during the development of this work for exploring
candidate arguments, adversarial proof review, algebraic checking, and
editorial assistance.  The author independently reviewed all mathematical
arguments and references and assumes full responsibility for the contents of
the paper.  Lean~4 with Mathlib and SymPy were used for auxiliary verification
of selected algebraic identities; these computational checks are not part of
the logical proof.

\end{document}